\newtheorem{df}{Definition}[section]
\newtheorem{thm}{Theorem}[section]
\newtheorem{conjecture}{Conjecture}[section]
\newtheorem{lem}{Lemma}[section]
\newtheorem{claim}{Claim}[section]
\newenvironment {proof} {\noindent{\em Proof.}}{\hspace*{\fill}$\Box$}
\title{\bf On two conjectures about the\\ proper connection number of graphs\footnote{Supported by NSFC No.11371205 and 11531011, the ``973" program No.2013CB834204, and PCSIRT.}}
\author{\small Fei Huang, Xueliang Li, Zhongmei Qin\\
{\small  Center for Combinatorics and LPMC}\\
{\small Nankai University, Tianjin 300071, P.R. China}\\
{\small Email: huangfei06@126.com, lxl@nankai.edu.cn, qinzhongmei90@163.com}\\
{\small Colton Magnant}\\
{\small Department of Mathematical Sciences}\\
{\small Georgia Southern University, Statesboro, GA 30460-8093, USA}\\
{\small Email: cmagnant@georgiasouthern.edu}\\
{\small Kenta Ozeki}\\
{\small National institute of Informatics, Tokyo 101-8430, Japan}\\
{\small Email: ozeki@nii.ac.jp}
}
\date{}
\begin{document}
\maketitle
\begin{abstract}
A path in an edge-colored graph is called proper if no two consecutive edges of the path receive the same color. For a connected graph $G$, the proper connection number $pc(G)$ of $G$ is defined as the minimum number of colors needed to color its edges so that every pair of distinct vertices of $G$ are connected by at least one proper path in $G$. In this paper, we consider two conjectures on the proper connection number of graphs.
The first conjecture states that if $G$ is a noncomplete graph with connectivity $\kappa(G) = 2$ and minimum degree $\delta(G)\ge 3$, then $pc(G) = 2$, posed by Borozan et al.~in [Discrete Math. 312(2012), 2550-2560]. We give a family of counterexamples to disprove this conjecture. However, from a result of Thomassen it follows that 3-edge-connected noncomplete graphs have proper connection number 2. Using this result, we can prove that if $G$ is a 2-connected noncomplete graph with $diam(G)=3$, then $pc(G) = 2$, which solves the second conjecture we want to mention, posed by Li and Magnant in [Theory \& Appl. Graphs 0(1)(2015), Art.2].
 \\[2mm]
\textbf{Keywords:} proper connection number; proper-path coloring; 2-connected; 3-edge-connected; diameter.\\
\textbf{AMS subject classification 2010:} 05C15, 05C40.\\
\end{abstract}

\section{Introduction}
All graphs in this paper are simple, finite and undirected. We follow \cite{BM} for graph theoretical notation and terminology not defined here.
Let $G$ be a connected graph with vertex set $V(G)$ and edge set $E(G)$. For $v \in V(G)$, let $N(v)$ denote the set of neighbors of $v$. For a subset $U \subseteq V(G)$, let $N(U)=\left(\bigcup_{v \in U}N(v)\right)\setminus U$. For any two disjoint subsets $X$ and $Y$ of $V(G)$, we use $E(X,Y)$ to denote the set of edges of $G$ that have one end in $X$ and the other in $Y$. Denote by $|E(X,Y)|$ the number of edges in $E(X,Y)$. An $(X, Y)$-path is a path which starts at a vertex of $X$, ends at a vertex of $Y$, and whose internal vertices belong to neither $X$ nor $Y$.

Let $G$ be a nontrivial connected graph with an {\it edge-coloring} $c : E(G)\rightarrow \{1, 2, \ldots, t\}$, $t \in \mathbb{N}$, where adjacent edges may have the same color. If adjacent edges of $G$ are assigned different colors by $c$, then $c$ is called a {\it proper (edge-)coloring}. For a graph $G$, the minimum number of colors needed in a proper coloring of $G$ is referred to as the {\it edge-chromatic number} of $G$ and denoted by $\chi'(G)$. A path of an edge-colored graph $G$ is said to be a {\it rainbow path} if no two edges on the path have the same color. The graph $G$ is called {\it rainbow connected} if for any two vertices there is a rainbow path of $G$ connecting them. An edge-coloring of a connected graph is a {\it rainbow connecting coloring} if it makes the graph rainbow connected. For a connected graph $G$, the \emph{rainbow connection number} $rc(G)$ of $G$ is defined to be the smallest number of colors that
are needed in order to make $G$ rainbow connected. The concept of rainbow connection of graphs was
introduced by Chartrand et al.~\cite{CJMZ} in 2008. Readers who are interested in this topic can
see \cite{LSS, LS} for a survey.

Motivated by the rainbow coloring and proper coloring in graphs, Andrews et al.~\cite{ALLZ} and Borozan et al.~\cite{BFGMMMT} introduced the concept of proper-path coloring. Let $G$ be a nontrivial connected graph with an edge-coloring. A path in $G$ is called a \emph{proper path} if no two adjacent edges of the path are colored with the same color. An edge-coloring of a connected graph $G$ is a \emph{proper-path coloring} if every pair of distinct vertices of $G$ are connected by a proper path in $G$. If $k$ colors are used, then $c$ is referred to as a {\it proper-path $k$-coloring}. An edge-colored graph $G$ is called {\it proper connected} if any two vertices of $G$ are connected by a proper path. For a connected graph $G$, the {\it proper connection number} of $G$, denoted by $pc(G)$, is defined as the smallest number of colors that are needed in order to make $G$ proper connected.

The proper connection of graphs has the following application background. When building a communication network between wireless signal towers, one fundamental requirement is that the network be connected. If there cannot be a direct connection between two towers $A$ and $B$, say for example if there is a mountain in between, there must be a route through other towers to get from $A$ to $B$. As a wireless transmission passes through a signal tower, to avoid interference, it would help if the incoming signal and the outgoing signal do not share the same frequency. Suppose that we assign a vertex to each signal tower, an edge between two vertices if the corresponding signal towers are directly connected by a signal and assign a color to each edge based on the assigned frequency used for the communication. Then, the number of frequencies needed to assign frequencies to the connections between towers so that there is always a path avoiding interference between each pair of towers is precisely the proper connection number of the corresponding graph.

Let $G$ be a nontrivial connected graph of order $n$ (number of vertices) and size $m$ (number of edges). Then the proper connection number of $G$ has the following clear bounds:
$$1\le pc(G)\le \min\{ rc(G), \chi'(G)\}\le m.$$
Furthermore, $pc(G) = 1$ if and only if $G = K_n$, $pc(G) = m$ if and only if $G = K_{1,m}$ is a star of size $m$.

Given an edge-colored path $P =v_1v_2\ldots v_{s-1}v_s$ between any two vertices
$v_1$ and $v_s$, we denote by $start(P)$ the color of the first edge in the path,
i.e., $c(v_1v_2)$, and by $end(P)$ the color of the last edge in the path, i.e., $c(v_{s-1}v_s)$.
If $P$ is just the edge $v_1v_s$, then $start(P)=end(P)=c(v_1v_s)$.

\begin{df}[\cite{BFGMMMT}]
Let $c$ be an edge-coloring of $G$ that makes $G$ proper connected. We say that $G$ has the \emph{strong property under $c$} if for any pair of vertices $u, v\in V(G)$, there exist two proper paths $P_1$, $P_2$ connecting them (not necessarily disjoint) such that $start(P_1)\neq start(P_2)$ and $end(P_1)\neq end(P_2)$.
\end{df}

Next we list the following three lemmas, which will be used in this work.

\begin{lem}[\cite{ALLZ}]\label{lem3}
If $G$ is a nontrivial connected graph and $H$ is a connected spanning
subgraph of $G$, then $pc(G) \leq pc(H)$. In particular, $pc(G) \leq pc(T)$
for every spanning tree $T$ of $G$.
\end{lem}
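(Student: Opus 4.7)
The plan is to show that any proper-path coloring of $H$ extends to a proper-path coloring of $G$ that uses the same number of colors. Specifically, I would start from an optimal proper-path coloring $c_H : E(H) \to \{1, 2, \ldots, pc(H)\}$ and define an edge-coloring $c_G$ of $G$ by setting $c_G(e) = c_H(e)$ for every $e \in E(H)$ and assigning an arbitrary color from the palette $\{1, 2, \ldots, pc(H)\}$ to each edge in $E(G) \setminus E(H)$. The key observation is that we do not need the extra edges to participate in any proper path; we just need to dispose of them without enlarging the color set.

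Next, I would verify that $c_G$ is a proper-path coloring of $G$. Let $u, v$ be any two distinct vertices of $G$. Since $H$ is a spanning subgraph, $u, v \in V(H)$, and since $c_H$ makes $H$ proper connected, there exists a proper path $P$ in $H$ from $u$ to $v$. Because $E(H) \subseteq E(G)$ and $c_G$ coincides with $c_H$ on $E(H)$, the same path $P$ is a proper path in $G$ from $u$ to $v$ under $c_G$. Thus $c_G$ makes $G$ proper connected using at most $pc(H)$ colors, which yields $pc(G) \leq pc(H)$.

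For the ``in particular'' statement, every spanning tree $T$ of $G$ is, by definition, a connected spanning subgraph of $G$, so the inequality $pc(G) \leq pc(T)$ follows immediately from the first part. There is essentially no obstacle here: the lemma is a direct consequence of the definition of proper connection, which requires only the \emph{existence} of a proper path between each pair of vertices and places no constraint on edges not used in such a path. The only point worth underlining is that the palette $\{1, 2, \ldots, pc(H)\}$ is reused for the extra edges, so that the number of colors in $c_G$ does not exceed $pc(H)$.
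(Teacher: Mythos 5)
Your proof is correct and is the standard argument for this lemma: extend an optimal proper-path coloring of $H$ arbitrarily over $E(G)\setminus E(H)$ using the same palette, and note that every proper path in $H$ survives as a proper path in $G$. The paper itself cites this lemma from Andrews et al.\ without reproducing a proof, and your argument matches the one given there in essence.
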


\begin{lem}[\cite{BFGMMMT}]\label{lem4}
If $G$ is a 2-connected graph with $n$ vertices, then $pc(G) \leq 3$.
Furthermore, there exists a 3-edge-coloring $c$ of $G$ such that $G$ has the strong property
under $c$.
\end{lem}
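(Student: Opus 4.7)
The natural approach is to argue by induction using an open ear decomposition of $G$. By Whitney's theorem, the graph $G$ admits a chain $G_0 \subset G_1 \subset \cdots \subset G_k = G$ in which $G_0$ is a cycle and each $G_i$ is obtained from $G_{i-1}$ by attaching an open ear $P_i$---a path (possibly a single edge) whose endpoints lie in $G_{i-1}$ and whose internal vertices are new. Each intermediate $G_i$ is itself 2-connected, so one can induct on the number of ears $k$.

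For the base case, $G_0 = C_n$ is a cycle. I would exhibit an explicit 3-edge-coloring by alternating colors $1$ and $2$ along the cycle and using color $3$ on a single edge to resolve parity when $n$ is odd. The two arcs of $C_n$ between any pair of vertices supply two candidate proper paths, and a short inspection of their first and last edges confirms that the strong property holds.

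For the inductive step, let $c$ be a 3-coloring of $G_{k-1}$ with the strong property and let the new ear $P_k$ have endpoints $u, v \in V(G_{k-1})$ and internal vertices $w_1, \ldots, w_\ell$. I would extend $c$ by choosing a proper sequence of colors along $P_k$; the only sensitive choices are $\alpha = c(u w_1)$ and $\beta = c(w_\ell v)$ at the two attachment edges. Pairs of vertices in $V(G_{k-1})$ inherit the strong property from $c$. For two internal vertices of $P_k$, the subpath of $P_k$ between them gives one proper path, while a detour that leaves $P_k$ at one endpoint, crosses $G_{k-1}$ via a proper $u$-$v$ path compatible with $\alpha, \beta$, and re-enters $P_k$ at the other endpoint gives the second. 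For a mixed pair $(w_i, x)$ with $x \in V(G_{k-1})$, I would route one proper path from $w_i$ through $u$ and one from $w_i$ through $v$, invoking the strong property of $c$ at the pairs $(u, x)$ and $(v, x)$ to pick $G_{k-1}$-portions whose starting colors differ from $\alpha$ and $\beta$.

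The main obstacle lies in that last mixed case. After arranging that each composite path is proper, one must also ensure that the two composites have distinct start colors at $w_i$ (which is automatic from the proper coloring of $P_k$) and distinct end colors at $x$. Since the strong property in $G_{k-1}$ furnishes only two paths per pair, whose start and end colors may be rigidly linked, a careless choice of $\alpha$ and $\beta$ can collide and force both composites to end with the same color at $x$. The heart of the proof is a careful case analysis that picks $\alpha, \beta$---together with the internal colors of $P_k$---based on how the available start/end pairs at $(u,x)$ and $(v,x)$ line up, and that treats separately the degenerate ears (a single edge, or a path of length two) where $\alpha$ and $\beta$ cannot be chosen freely.
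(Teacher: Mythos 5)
You should first note that the paper you are working from contains no proof of this lemma at all: it is quoted from \cite{BFGMMMT}, so there is no in-house argument to compare against. That said, the original proof in \cite{BFGMMMT} does proceed essentially as you propose, by induction on an open ear decomposition starting from a cycle, and your base case and your treatment of pairs inside $G_{k-1}$ and of pairs of internal ear vertices are on the right track.

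The genuine gap is in the mixed case, and it is not merely a matter of deferred ``careful case analysis'': the colors $\alpha = c(uw_1)$ and $\beta = c(w_\ell v)$ are chosen once for the entire ear, while the constraints you describe are imposed separately by \emph{every} vertex $x \in V(G_{k-1})$. Fix such an $x$ and let $S^1, S^2$ (resp.\ $T^1, T^2$) be the two $u$--$x$ (resp.\ $v$--$x$) proper paths supplied by the strong property of $c$ on $G_{k-1}$. If $\alpha$ equals one of $start(S^1), start(S^2)$ and $\beta$ equals one of $start(T^1), start(T^2)$, then each side admits a unique usable continuation, and the two forced end colors at $x$ may coincide, destroying the strong property for the pair $(w_i, x)$. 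Each such $x$ thus forbids up to two of the candidate pairs $(\alpha,\beta)$, and different vertices $x$ may forbid different pairs; nothing in your sketch rules out that the forbidden sets cover all admissible choices, so the existence of a single good $(\alpha,\beta)$ is exactly the statement you have not proved. (A milder instance of the same issue already occurs in your detour for two internal ear vertices: you need one $u$--$v$ path with start $\ne \alpha$ \emph{and} end $\ne \beta$, which the strong property alone does not guarantee for an arbitrary pair $(\alpha,\beta)$, since one of the two supplied paths may fail the first condition and the other the second.) Closing this requires either a stronger inductive invariant than the strong property as stated, or an argument that the per-vertex constraints cannot be simultaneously unsatisfiable; that is the real content of the proof in \cite{BFGMMMT} and it is absent from your plan.
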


\begin{lem}[\cite{BFGMMMT, HLW}]\label{lem1}
If $G$ is a connected bridgeless bipartite graph with $n$ vertices, then $pc(G) \leq
2$. Furthermore, there exists a 2-edge-coloring $c$ of $G$ such that $G$ has the strong
property under $c$.
\end{lem}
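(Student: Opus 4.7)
The plan is to proceed by induction on an ear decomposition of $G$. Since $G$ is connected and bridgeless, $G$ is $2$-edge-connected, so by Whitney's theorem it admits an ear decomposition $G = C_0 \cup P_1 \cup P_2 \cup \cdots \cup P_k$, where $C_0$ is a cycle and each $P_i$ is an ear attached to the current subgraph $G_{i-1}=C_0 \cup P_1 \cup \cdots \cup P_{i-1}$. Because $G$ is bipartite, $C_0$ has even length and each ear $P_i$ has length of the appropriate parity forced by the bipartition classes of its endpoints.

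For the base case I would color the edges of $C_0$ alternately with colors $1$ and $2$. Since $C_0$ is an even cycle, this yields a proper $2$-edge-coloring under which, for any two vertices $u,v \in V(C_0)$, the two arcs of $C_0$ between them are both proper paths whose starting colors differ and whose ending colors differ. Hence $C_0$ has the strong property.

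For the inductive step, assume $G_{i-1}$ carries a $2$-coloring with the strong property and let $P_i$ be the next ear with endpoints $x,y\in V(G_{i-1})$. Color the edges of $P_i$ alternately with $1$ and $2$; this determines the coloring up to the swap $1\leftrightarrow 2$, and I would pick the swap so that the combined coloring still has the strong property on $G_i = G_{i-1}\cup P_i$. For any pair $u,v$, if both lie in $G_{i-1}$ the inductive hypothesis applies directly; if at least one lies in the interior of $P_i$, the two required proper paths are obtained by concatenating a sub-path of $P_i$ from that vertex to $x$ (or $y$) with one of the two strong-property proper paths in $G_{i-1}$ from $x$ (or $y$) to the target, choosing which strong-property path to use at $x$ and $y$ so that the two combined paths differ in both their start and their end colors.

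The main obstacle is verifying that a single choice of starting color on $P_i$ simultaneously yields the strong property for all pairs involving the interior of $P_i$, in particular for two vertices both internal to $P_i$, where one must route around the ear in both directions through $G_{i-1}$, and for the degenerate case when $P_i$ is a single edge. This reduces to a short parity argument along $P_i$ combined with the color flexibility at $x$ and $y$ provided by the inductive strong property, and it is precisely here that bipartiteness of $G$ is essential, since it controls the parity of the ear length and hence the compatibility of the alternating coloring at the two attachment points.
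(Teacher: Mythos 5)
The paper does not prove this lemma at all: it is imported verbatim from the two cited references, so there is no in-paper argument to compare yours against. Your ear-decomposition plan is essentially the strategy of the original source, but as written it has two gaps. First, a minor one: $G$ is only assumed bridgeless, i.e.\ $2$-edge-connected, not $2$-connected, so Whitney's theorem does not give you an open ear decomposition; you need a closed ear decomposition in which some ears are cycles meeting the previously built subgraph in a single vertex, and that case has to be treated separately (it is doable, but it is not the decomposition you invoked).

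The serious gap is in the inductive step, precisely at the point you flag as ``a short parity argument.'' Take $u,v$ both interior to the new ear $P_i$ with endpoints $x,y$. One proper path is the subpath of $P_i$; the second must run from $u$ along the ear to $x$, through $G_{i-1}$ from $x$ to $y$, and back along the ear from $y$ to $v$. For this concatenation to be proper you need a proper $x$--$y$ path in $G_{i-1}$ whose first edge avoids one prescribed color (the ear color at $x$) \emph{and} whose last edge avoids another prescribed color (the ear color at $y$) simultaneously. The strong property of $G_{i-1}$ only supplies two paths $R_1,R_2$ with $start(R_1)\neq start(R_2)$ and $end(R_1)\neq end(R_2)$; it is perfectly possible that the one with the admissible start has the inadmissible end and vice versa, so neither concatenation is proper. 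A similar problem occurs for $u$ in the interior of $P_i$ and $v\in G_{i-1}$: after forcing the start colors of the $x$--$v$ and $y$--$v$ continuations so that the junctions at $x$ and $y$ are proper, you have no control left to make the two resulting paths end at $v$ with different colors, which the strong property of $G_i$ requires. Closing these cases needs either a strengthened induction hypothesis (controlling all admissible start/end color pairs) or a different construction; indeed, this is exactly the subtlety that made the original argument of the first cited reference require the repair given in the second. As it stands, your inductive step does not go through.
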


\begin{lem}[\cite{ALLZ}]\label{lem2}
Let $G$ be a connected graph and $v$ a vertex not in $G$. If $pc(G)=2$, then $pc(G \cup v)=2$ as long as $d(v)\geq 2$, that is, we connect $v$ to $G$ by using at least two edges.
\end{lem}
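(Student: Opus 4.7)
The plan is to take a proper-path 2-edge-coloring $c$ of $G$ with colors $\{1,2\}$, guaranteed by $pc(G)=2$, and to extend it to $G\cup v$ by assigning colors in $\{1,2\}$ to the new edges $vu_1,\dots,vu_d$, where $u_1,\dots,u_d$ (with $d\ge 2$) are the neighbors of $v$ in $G\cup v$. My first attempt would be the natural choice: set $c(vu_1)=1$ and $c(vu_2)=2$, and color each remaining edge $vu_i$ ($i\ge 3$) arbitrarily from $\{1,2\}$.

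To verify that this extended coloring makes $G\cup v$ proper connected, I would check, for every $w\in V(G)$, the existence of a proper $(v,w)$-path. The case $w\in N(v)$ is handled by the edge $vw$ itself, which is a proper path of length one. For $w\in V(G)\setminus N(v)$, the hypothesis $pc(G)=2$ yields proper paths $P_1, P_2$ in $G$ from $u_1$ and $u_2$ respectively to $w$; the concatenation $vu_i\cdot P_i$ is a proper $(v,w)$-path precisely when $\mathrm{start}(P_i)\ne c(vu_i)$. Hence, if for every such $w$ there exists either a proper $(u_1,w)$-path in $G$ starting with color $2$, or a proper $(u_2,w)$-path starting with color $1$, the extended coloring succeeds.

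The main obstacle is the residual case: some $w$ for which every proper $(u_1,w)$-path in $G$ starts with color $1$ and every proper $(u_2,w)$-path starts with color $2$, so that neither direct extension is proper. To resolve such a $w$, I would either (i) exploit the color profile of $u_1, u_2$ in $G$ --- if some $u_i$ has only one color appearing on its incident edges in $G$, say $\alpha$, then reassigning $c(vu_i)$ to the opposite color automatically makes the concatenation with every proper $(u_i,w)$-path proper, since every such path must begin with color $\alpha$ --- or (ii) construct a longer proper $(v,w)$-path that visits both $u_1$ and $u_2$ via a suitable proper $(u_1,u_2)$-path in $G$ with appropriately matched start and end colors, or (iii) swap the colors of $vu_1$ and $vu_2$ and check that the swapped coloring handles the residual $w$ without introducing new conflicts elsewhere.

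The crux is a careful global analysis showing that the structural constraints of a proper 2-coloring of $G$ with $pc(G)=2$ prevent the residual case from simultaneously obstructing every candidate extension of $c$ to the edges incident to $v$. This case analysis --- verifying that some single choice of colors for the edges at $v$ works uniformly for every $w\in V(G)$, without needing to retreat from the hypothesis that $d(v)\ge 2$ --- is the step I expect to demand the most care.
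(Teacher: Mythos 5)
The paper does not actually prove this lemma; it is imported from \cite{ALLZ} without proof, so your attempt can only be judged on its own terms --- and on those terms it has a genuine gap. Your setup is correct: since $G$ itself stays proper connected under $c$, only pairs $\{v,w\}$ need checking, and every proper $(v,w)$-path in $G\cup v$ is an edge $vu_i$ followed by a proper $(u_i,w)$-path $P$ of $G$ with $\mathrm{start}(P)\ne c(vu_i)$. Writing $S_i(w)\subseteq\{1,2\}$ for the set of start-colors of proper $(u_i,w)$-paths in $G$ (nonempty for $w\ne u_i$ because $c$ is a proper-path coloring), an assignment $c(vu_1)=\gamma_1$, $c(vu_2)=\gamma_2$ fails exactly at those $w$ with $S_1(w)=\{\gamma_1\}$ and $S_2(w)=\{\gamma_2\}$. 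So the entire content of the lemma is the claim that at least one of the four patterns $(\{1\},\{1\})$, $(\{1\},\{2\})$, $(\{2\},\{1\})$, $(\{2\},\{2\})$ is realized by no vertex $w$ --- or, failing that, that $c$ can be replaced by a different proper-path $2$-coloring of $G$ for which this holds. You never establish this; your closing sentence, that the ``careful global analysis'' is ``the step I expect to demand the most care,'' concedes that the decisive step is identified but not carried out.

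The three remedies you list do not close this gap. Remedy (i) handles only the degenerate case where every edge at some $u_i$ has the same color, in which case $S_i(w)$ is constantly that singleton and the opposite choice of $c(vu_i)$ works for all $w$ simultaneously; in general each $u_i$ sees both colors and $S_i(\cdot)$ genuinely varies with $w$ (already so when $G$ is a properly $2$-colored path and $u_i$ is an internal vertex). Remedy (ii) is vacuous: a proper $(v,w)$-path visiting both $u_1$ and $u_2$ is still of the form $vu_i$ followed by a proper $(u_i,w)$-path of $G$, so it adds nothing beyond the sets $S_i(w)$ you have already accounted for. Remedy (iii) is exactly the unproved pigeonhole claim: trying all four assignments of $(\gamma_1,\gamma_2)$ succeeds if and only if one of the four obstructing patterns is absent, and nothing in your argument rules out that four distinct vertices realize all four patterns at once under the given coloring $c$. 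Until that possibility is excluded (or a recoloring argument is supplied in its place), the proof is incomplete.
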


For more details we refer to \cite{ALLZ,BFGMMMT,GLQ,HLQC,HLW1,LWY} and a dynamic survey \cite{LC}.

The first conjecture we will consider in this paper is as follows, which was posed by Borozan et al.~in \cite{BFGMMMT}.

\begin{conjecture}[\cite{BFGMMMT}]\label{conj1}
If $G$ is not a complete graph such that the connectivity $\kappa(G) = 2$ and the minimum degree $\delta(G)\ge 3$, then $pc(G) = 2$.
\end{conjecture}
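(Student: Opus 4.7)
My plan is to determine whether Conjecture~\ref{conj1} holds by first pursuing the natural 2-cut decomposition argument, and then, at the point where that approach breaks, turning the obstruction into an explicit counterexample. Lemma~\ref{lem4} already yields $pc(G)\le 3$, so the question reduces to whether $pc(G)\le 2$ can be forced for every noncomplete 2-connected $G$ with $\delta(G)\ge 3$.

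First I would fix a 2-vertex-cut $\{u,v\}$ of $G$, let $G_1,\dots,G_k$ (with $k\ge 2$) be the components of $G-\{u,v\}$, and set $H_i=G[V(G_i)\cup\{u,v\}]$. Any proper path between a vertex of $G_i$ and a vertex of $G_j$ with $i\ne j$ must pass through $u$ or $v$. The natural attempt is to 2-color each $H_i$ in a way that, for every vertex $x\in V(H_i)$, there exist proper $x$-$u$ and $x$-$v$ paths starting (or ending) with each of the two colors: a strong property relative to the cut pair. If every $H_i$ admits such a 2-coloring, the glued coloring makes $G$ proper connected with only 2 colors.

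The step I expect to be the main obstacle, and the reason the conjecture is likely false, is that such a strong 2-coloring of $H_i$ need not exist. With only two colors available, the requirement that both $u$ and $v$ simultaneously satisfy the strong property is very restrictive; in a 2-connected graph where $u$ and $v$ each have only a few neighbors inside $H_i$, one can easily imagine every 2-coloring forcing all edges incident to $u$ in $H_i$ to share one color (and similarly for $v$), which kills the strong property at some neighbor of $u$ or $v$. This indicates the shape of the building blocks to search for.

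Concretely, the plan to disprove Conjecture~\ref{conj1} is: (i) exhibit a small 2-connected graph $H$ with two specified vertices $u,v$, minimum degree at least $3$ off $\{u,v\}$ and at least $2$ at $u,v$, such that no 2-coloring of $H$ simultaneously has the strong property at both $u$ and $v$; (ii) let $G$ be obtained from two disjoint copies of $H$ by identifying the two copies of $u$ and the two copies of $v$; (iii) verify $\kappa(G)=2$ and $\delta(G)\ge 3$; (iv) show $pc(G)=3$ by a finite case analysis over all 2-colorings, producing in each case a pair of vertices in different copies of $H$ with no proper path between them. The bulk of the technical work lies in step (iv), which requires ruling out proper paths of all lengths across the cut; but once $H$ is small this reduces to a bounded check on each possible color pattern at the edges incident to $\{u,v\}$.
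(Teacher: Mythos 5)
You are right that the conjecture is false and that the disproof must come from an explicit counterexample built out of gadgets on which 2-colorings are too constrained; this matches the spirit of the paper, which exhibits a family of counterexamples and proves they have proper connection number $3$. However, your proposal has a genuine gap at its core: the construction in steps (i)--(ii) is never exhibited, and its sufficiency is doubtful. Gluing \emph{two} copies of a gadget $H$ at $\{u,v\}$ is weaker than you suggest, because for a vertex $x$ in one copy and $y$ in the other you do not need the strong property at both $u$ and $v$; you only need \emph{one} compatible hand-off, i.e.\ a proper $x$--$u$ path in the first copy whose last color differs from the first color of some proper $u$--$y$ path in the second copy (or the analogous condition at $v$, or a path weaving through both cut vertices). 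Since the two copies may be colored \emph{differently}, the colorer can typically arrange this compatibility even when no single copy has the strong property at both $u$ and $v$. Your criterion (i) is therefore not the right invariant, and step (iv) --- the entire substance of the disproof --- is deferred to an unspecified ``finite case analysis'' on an unspecified graph.

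The paper's construction shows what is actually needed and why two blocks are not enough. It uses \emph{three} pairs of parallel chains $(A,A')$, $(B,B')$, $(C,C')$, each pair joined at its own two hub vertices, with the six hubs linked in a ring by single edges $ac'$, $a'b$, $b'c$. Two features do the work: first, each chain contains internal cut-edges, which force long runs of edges to share a color and ultimately force, in each pair, a ``one-way'' vertex all of whose proper exits use exactly one of the two ring edges (this also uses a parity argument comparing the two parallel chains $A$ and $A'$); second, with \emph{three} such one-way vertices each choosing one of two exit directions, the pigeonhole principle guarantees two of them point toward each other, leaving no proper path between them. Your two-copy gluing has no analogue of this pigeonhole step, which is precisely the mechanism that converts ``the strong property fails locally'' into ``some pair of vertices is disconnected.'' To repair your plan you would need either to prove that some specific $H$ forces the incompatibility for \emph{every} pair of (possibly different) colorings of the two copies, or to adopt a three-block arrangement as the paper does.
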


The second conjecture we will consider is as follows, which was posed by Li and Magnant in \cite{LC}
\begin{conjecture}[\cite{LC}]\label{conj2}
If $G$ is a 2-connected noncomplete graph with $ diam(G)=3$, then $pc(G) = 2$.
\end{conjecture}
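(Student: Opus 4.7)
The plan is to dichotomize on the edge-connectivity $\lambda(G)$. Since $G$ is 2-connected we have $\lambda(G)\ge 2$, and if $\lambda(G)\ge 3$ then the consequence of Thomassen's theorem announced in the abstract gives $pc(G)=2$ directly. So the real work is the case $\lambda(G)=2$.

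Fix a minimum edge cut $F=\{e_1,e_2\}$ and write $e_i=u_iv_i$ with $u_1,u_2$ on one side and $v_1,v_2$ on the other. The 2-connectivity of $G$ forces $u_1\ne u_2$ and $v_1\ne v_2$, for otherwise the common endpoint would be a cut vertex. Let $G_1,G_2$ be the components of $G-F$ containing the $u_i$ and the $v_i$ respectively. For any $x\in V(G_1)$ and $y\in V(G_2)$,
\[
d_G(x,y)=\min_{i\in\{1,2\}}\bigl(d_{G_1}(x,u_i)+1+d_{G_2}(v_i,y)\bigr)\le 3.
\]
Specializing $y$ to a vertex of $V(G_2)\setminus\{v_1,v_2\}$ (and symmetrically), this would yield $V(G_1)\subseteq \{u_1,u_2\}\cup N_{G_1}(u_1)\cup N_{G_1}(u_2)$ and likewise for $G_2$; in addition, if some vertex of $G_1$ lies at $G_1$-distance $\ge 2$ from $u_1$, then every vertex of $V(G_2)\setminus\{v_1,v_2\}$ must be adjacent to $v_2$, and the three symmetric statements. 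So each side is very ``flat'' with respect to its cut endpoints, and in the asymmetric regimes one of $G_1,G_2$ is forced to be essentially a double-star.

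With the structure pinned down, I would construct a proper-path 2-coloring: assign distinct colors to $e_1$ and $e_2$, and extend over each $G_i$. In the generic sub-case, $G_i$ (after, if necessary, adding an auxiliary edge $u_1u_2$ or $v_1v_2$) is bridgeless bipartite, and Lemma~\ref{lem1} yields a 2-coloring with the strong property; this is precisely what is needed, because the strong property lets one leave $G_i$ through either cut edge by a proper path, so cross pairs $(x,y)$ are automatically joined by a proper path. The remaining configurations, where $G_i$ contains bridges or odd cycles incompatible with Lemma~\ref{lem1}, are cut down to a short list by the structural paragraph and can be dispatched by explicit colorings, using Lemma~\ref{lem2} to absorb pendant-type vertices into an already colored core and Lemma~\ref{lem3} to pass to convenient spanning subgraphs. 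I expect this last bookkeeping step --- simultaneously producing proper paths for all intra-$G_i$ pairs and all cross pairs in the non-bipartite or bridged sub-cases --- to be the main obstacle, although the tightness of the diameter-3 hypothesis should keep the case list manageably small.
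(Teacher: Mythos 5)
Your opening move --- splitting on the edge-connectivity and invoking the consequence of Thomassen's theorem when $\lambda(G)\ge 3$ --- is exactly the paper's, and the idea of exploiting $diam(G)=3$ to constrain the two sides of a 2-edge-cut is also the right instinct. But there are two genuine gaps in the $\lambda(G)=2$ case. First, you fix an \emph{arbitrary} minimum edge cut, and your very first deduction ($u_1\neq u_2$ and $v_1\neq v_2$) is false for some choices: if one side of the cut is a single vertex $w$ of degree $2$, both cut edges share the endpoint $w$, and $w$ need not be a cut vertex. Worse, when one side has at most two vertices, your specialization ``take $y\in V(G_2)\setminus\{v_1,v_2\}$'' is vacuous and the diameter hypothesis tells you nothing about the structure of the big side. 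The paper therefore has to distinguish whether there \emph{exists} a 2-edge-cut with both components of size at least $3$; when there is none (e.g.\ a dense core with degree-2 vertices attached by short ears), it needs an entirely different argument --- take a maximal 2-edge-connected bipartite subgraph $H$, show $H$ contains every vertex of degree at least $3$ (via three edge-disjoint paths to $H$ and a parity/pigeonhole step), and then hand-craft a coloring for the attached vertices and edges, whose two attachment points necessarily land in the same part of $H$'s bipartition. None of this is visible in your sketch.

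Second, even in the favorable case where both sides are large, your key claim --- that each $G_i$, possibly after adding $u_1u_2$ or $v_1v_2$, is bridgeless bipartite so that Lemma~\ref{lem1} applies --- is false. Take two triangles $u_1u_2w$ and $v_1v_2z$ joined by the edges $u_1v_1$ and $u_2v_2$: this graph is 2-connected, noncomplete, of diameter $3$, yet each side is $K_3$, which admits no 2-edge-coloring with the strong property (any two proper $u_1$--$u_2$ paths share a start color or an end color). The correct move is not to color each side separately but to extract a 2-connected bipartite core from the \emph{whole} graph --- the cut edges together with the edges from $u_i,v_i$ to their common neighbors $Q_{i,0}=N(u_i)\cap N(v_i)$ (in the example, the spanning $6$-cycle $u_1wu_2v_2zv_1$) --- apply Lemmas~\ref{lem3} and~\ref{lem1} to that core, and then absorb the remaining vertices one at a time by Lemma~\ref{lem2}, using a 2-fan argument to guarantee that some uncovered vertex always has two edges into the current subgraph. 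Your proposal gestures at Lemmas~\ref{lem2} and~\ref{lem3} for ``bookkeeping,'' but this absorption scheme, the choice of the bipartite core, and the entire small-side case are the substance of the proof, not residual details.
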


\begin{figure}
  \centering
   \scalebox{0.7}{\includegraphics{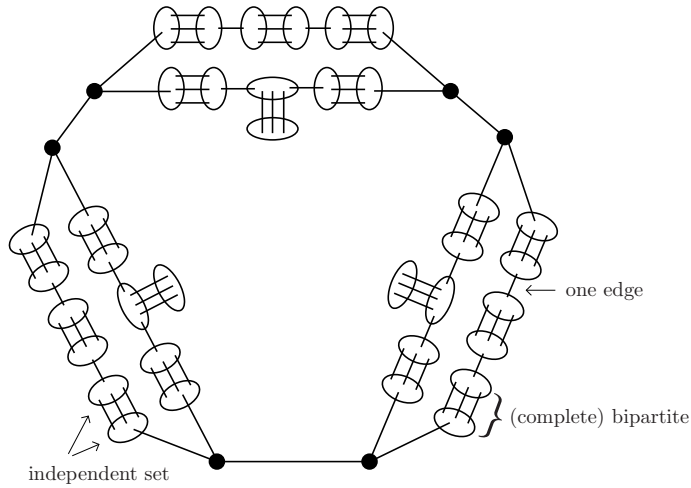}}\\
  \caption{Counterexamples for Conjecture~\ref{conj1} \label{Fig1}}\
\end{figure}

Unfortunately, Conjecture \ref{conj1} is not true. A family of counterexamples is shown in Figure~\ref{Fig1}. It is obvious that the graph in Figure 1 has connectivity 2 and minimum degree 3, however, we will show in next section that it has proper connection number 3, but not 2. From a result of Thomassen \cite{T} it follows immediately that 3-edge-connected noncomplete graphs have proper connection number 2. Using this result we can prove in Section~3 that Conjecture~\ref{conj2} holds true.

\begin{figure}[h,t,b,p]
  \centering
 \scalebox{0.8}{\includegraphics{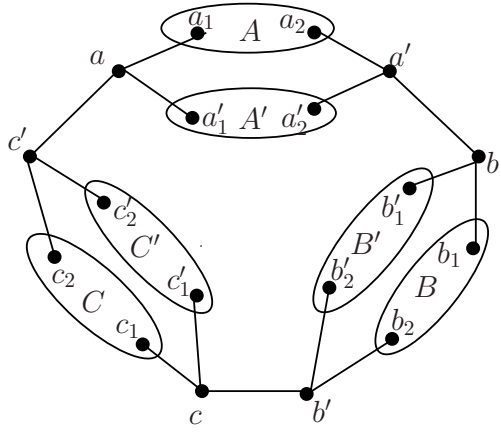}}\\
  \caption{Labeling of the graph $G$ \label{Fig2}}
\end{figure}

\section{Disprove Conjecture \ref{conj1}}

\begin{thm}
Let $G$ be a graph as shown in Figure~\ref{Fig1}. Then, $pc(G)=3$, which disproves Conjecture \ref{conj1}.
\end{thm}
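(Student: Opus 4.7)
The plan is to split the argument into a routine upper bound and a careful lower bound. For the upper bound, $pc(G)\le 3$ is immediate: $G$ is 2-connected, so Lemma~\ref{lem4} gives a proper-path $3$-coloring. All the work is therefore in showing $pc(G)\ge 3$, i.e.\ that no proper-path $2$-coloring of $G$ exists.

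To prove the lower bound, I would argue by contradiction. Assume that $c\colon E(G)\to\{1,2\}$ is a proper-path coloring and fix the labeling from Figure~\ref{Fig2}. Let $\{x,y\}$ be the $2$-vertex cut that splits $G$ into its two ``sides'', call them $H_1$ and $H_2$. The key structural observation to exploit is that \emph{every} $(H_1\setminus\{x,y\})$-to-$(H_2\setminus\{x,y\})$ proper path must pass through $x$ or $y$, so the colors of the few edges incident to $x$ and $y$ inside each side completely determine which cross-side pairs can be connected by a proper path. I would introduce shorthand for these incident colors (for instance, for each side $i$ and each cut vertex $z\in\{x,y\}$, the multiset of colors of edges from $z$ into $H_i$) and catalogue the possibilities up to the color-swap symmetry $1\leftrightarrow 2$.

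The heart of the proof is then a short case analysis. For each admissible pattern of colors at $x$ and $y$ on side $H_1$, I would check what color pattern is forced at $x$ and $y$ on side $H_2$ in order to properly connect, say, a specified vertex $a\in H_1\setminus\{x,y\}$ to a specified vertex $b\in H_2\setminus\{x,y\}$, and then identify a \emph{second} pair of vertices (typically a neighbor of $x$ in one side to a neighbor of $y$ in the other side, or two specific vertices of $H_1$ and $H_2$ that are highly color-constrained by their low degree) that cannot be properly connected under the forced pattern. Because $\delta(G)=3$, some vertices on each side have only three incident edges, which forces their incident colors to form a very rigid $\{1,1,2\}$ or $\{1,2,2\}$ pattern; this rigidity is what makes the case analysis finite and conclusive.

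The main obstacle will be organizing the case split cleanly: naively, one has many sub-cases according to how the four ``cut-incident'' edges on each side are colored, but using the $1\leftrightarrow 2$ symmetry and the left-right symmetry between $H_1$ and $H_2$ (both visible from Figure~\ref{Fig1}) I would reduce it to a handful of essentially distinct configurations. In each surviving configuration I would exhibit an explicit pair $(p,q)$ of vertices and verify that every $p$--$q$ walk in $G$ contains two consecutive same-colored edges, contradicting the assumption that $c$ is a proper-path coloring. Combined with $pc(G)\le 3$, this yields $pc(G)=3$ and hence disproves Conjecture~\ref{conj1}.
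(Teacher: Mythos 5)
Your upper bound is fine and matches the paper: $G$ is 2-connected, so Lemma~\ref{lem4} gives $pc(G)\le 3$. The problem is the lower bound, where your plan does not engage with the actual structure of the graph in Figure~\ref{Fig1}, and as written it could not be completed. The counterexample is not a ``two-sided'' graph split by a single $2$-vertex cut into $H_1$ and $H_2$; it is a \emph{cyclic} arrangement of three gadgets $(A,A')$, $(B,B')$, $(C,C')$ joined by the three single edges $ac'$, $a'b$, $b'c$, so the relevant separators are three $2$-edge-cuts (one pair of connecting edges per gadget), one for each gadget. Moreover, each gadget consists of two internal $a$--$a'$ paths of \emph{different parity}, each half containing two forced cut-edges. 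These features are exactly what the paper's proof uses, and they are absent from your outline.

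Concretely, the paper's argument has two ingredients you are missing. First, a parity argument inside a single gadget: using the two cut-edges in each of $A$ and $A'$ together with the parity mismatch between the two internal paths, one shows that some vertex $v\in A\cup A'$ has \emph{all} of its proper paths to the rest of $G$ leaving through only one of the two connecting edges $ac'$, $a'b$ (Claim~\ref{Claim:OneWay}). Second, a pigeonhole argument over the three gadgets arranged in a cycle: among the three resulting one-way vertices $a^*,b^*,c^*$, two must exit in the same rotational direction, and those two cannot be joined by any proper path. Your proposed ``short case analysis on the colors of the edges incident to $x$ and $y$'' cannot substitute for this: a generic analysis of colors at a $2$-vertex cut, using only $\kappa(G)=2$ and $\delta(G)=3$, would have to fail, since most such graphs \emph{do} satisfy $pc(G)=2$ (that is what made the conjecture plausible). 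Any correct proof must isolate and exploit the gadget parity and the three-fold cyclic pigeonhole, and your proposal identifies neither, so this is a genuine gap rather than a different but valid route.
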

\begin{proof}
First, we label the graph $G$ as in Figure~\ref{Fig2} for simplicity where subgraphs correspond to the subgraphs in Figure~\ref{Fig1}. From Lemma \ref{lem4}, we have that $pc(G) \le 3$. Thus, it is sufficient to show that $pc(G) \ne 2$. Assume, to the contrary, that we have a 2-edge-coloring $c$ which makes $G$ proper connected. Then, for any two vertices of $G$, there is a proper path connecting them.

\begin{claim}\label{Claim:OneWay}
In each pair $(A, A')$, $(B, B')$ and $(C, C')$, say $(A, A')$, there exists a vertex $v$ such that either all the proper paths from $v$ to $G \setminus (A \cup A')$ go through the edge $ac'$ rather than $a'b$ or all the proper paths from $v$ to $G \setminus (A \cup A')$ go through the edge $a'b$ rather than $ac'$.
\end{claim}

\begin{proof}
Suppose, to the contrary, that every vertex in $A \cup A'$ has proper paths to $G \setminus (A \cup A')$ through $ac'$ and also has proper paths to $G \setminus (A \cup A')$ through $a'b$. Let $f=v_1v_2$ and $f'=v_3v_4$ be the two cut-edges in $G[A]$ with $f$ the closer edge to $a$ and $f'$ the closer edge to $a'$. Let $e=u_1u_2$ and $e'=u_3u_4$ be the two cut-edges in $G[A']$ with $e$ the closer edge to $a$ and $e'$ the closer edge to $a'$. Also assume the vertices with lower index on each of these edges are closer to $a$.

If $aa_1$ and $f$ have different colors, then all the proper paths from $v_2$ to $G \setminus A $ must go through $a_2a'$. Thus, $a'a_2'$ and $a'b$ have the same color. So $a_2'$ has no proper path to $G \setminus (A \cup A')$ through $a'b$, a contradiction. Hence, $aa_1$ and $f$ have the same color. Similarly, $f$ and $f'$ as well as $f'$ and $a_2a'$ have the same color. Thus, $aa_1$, $f$, $f'$ and $a_2a'$ all have the same color. If $a'a_2$ and $a'b$ have the same color, then there is no proper path from $a_{2}$ to $G \setminus (A \cup A')$ through $a'b$ since the parity of any path from $a$ to $a'$ passing through $A$ is different from the parity of any path from $a$ to $a'$ passing through $A'$. Thus, $a'a_2$ and $a'b$ have different colors and symmetrically, $aa_1$ and $ac'$ have different colors.

If $e$ has the opposite color from $aa_{1}'$, then $u_{2}$ cannot possibly have a proper path leaving $A'$ through the edge $a_{1}'a$. Then, since $a_{2}a'$ and $a'b$ have different colors, the edge $a_{2}'a'$ must share one of those colors. This means that $u_{2}$ has a proper path to $G \setminus (A \cup A')$ through only one of $ac'$ or $a'b$, a contradiction. On the other hand, this means that $e$ must have the same color as $aa_{1}'$. Similarly, we have that $e'$ must have the same color as $a'a_{2}'$. If $e$ and $e'$ have the same color, then $u_{1}$ cannot possibly have a proper path leaving $A'$ through the edge $a_{2}'a$. Thus, $u_{1}$ has no proper path to $G \setminus (A \cup A')$ through one of $ac'$ or $a'b$, a contradiction. Now we have that $e$ and $e'$ have different colors. By the symmetry, we suppose that $aa_1'$ and $aa_1$ have the same color and $a'a_2'$ and $a'b$ have the same color. Thus, any vertex of $A'$ has no proper path to $G \setminus (A \cup A')$ through $a'b$, which is a contradiction. So, there exists a vertex $v \in A \cup A'$ such that either all the proper paths from $v$ to $G \setminus (A \cup A')$ go through the edge $ac'$ rather than $a'b$ or all the proper paths from $v$ to $G \setminus (A \cup A')$ go through the edge $a'b$ rather than $ac'$. Of course, the same argument holds in $B \cup B'$ and $C \cup C'$ to complete the proof.
\end{proof}

Let $a^{*}$ be the vertex in $A \cup A'$ resulting from Claim~\ref{Claim:OneWay} and similarly define $b^{*}$ and $c^{*}$. By the pigeonhole principle, there exists a pair of these vertices that leave their respective sets in the same direction. More specifically, we may assume without loss of generality that all the proper paths from $a^{*}$ to $G \setminus (A \cup A')$ through only $ac'$ (and not $a'b$) and all the proper paths from $b^{*}$ to $G \setminus (B \cup B')$ through only $ba'$ (and not $b'c$). Then there can be no proper path from $a^{*}$ to $b^{*}$.
\end{proof}\\

\section{Proof of Conjecture~\ref{conj2}}

Thomassen \cite{T} observed that given a graph $G$ which is at least $(2k-1)$-edge-connected, then $G$ contains a bipartite spanning subgraph $H$ for which $H$ is $k$-edge-connected. Combining with Lemma \ref{lem1}, we have the following Theorem.

\begin{thm}\label{th0}
If $G$ is a 3-edge-connected noncomplete graph, then $pc(G) = 2$ and there exists a 2-edge-coloring $c$ of $G$ such that $G$ has the strong property under $c$.
\end{thm}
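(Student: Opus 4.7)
The plan is to combine Thomassen's theorem directly with Lemma~\ref{lem1}, essentially the strategy suggested in the paragraph preceding the statement. Since $G$ is 3-edge-connected, applying Thomassen's result with $k=2$ (so that $2k-1=3$) produces a spanning bipartite subgraph $H$ of $G$ that is 2-edge-connected. In particular, $H$ is connected and bridgeless, so Lemma~\ref{lem1} applies to $H$ and yields a 2-edge-coloring $c_H$ of $H$ under which $H$ has the strong property.

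Next I would lift this coloring from $H$ to $G$. Define an edge-coloring $c$ of $G$ by setting $c(e)=c_H(e)$ for $e\in E(H)$ and assigning the edges of $E(G)\setminus E(H)$ arbitrarily with colors from $\{1,2\}$. The point is that any proper path in $H$ is still a proper path in $G$, so for every pair of vertices $u,v\in V(G)=V(H)$ the two proper $(u,v)$-paths $P_1,P_2$ in $H$ guaranteed by the strong property of $c_H$ are proper paths in $G$ with $\mathrm{start}(P_1)\neq\mathrm{start}(P_2)$ and $\mathrm{end}(P_1)\neq\mathrm{end}(P_2)$. Therefore $G$ has the strong property under $c$, which in particular gives $pc(G)\le 2$.

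Finally, since $G$ is noncomplete we have $pc(G)\ge 2$ from the equivalence $pc(G)=1\iff G=K_n$ noted in the introduction, so $pc(G)=2$. There is no real obstacle here: the only nontrivial ingredient is Thomassen's theorem, and the matching between its hypothesis $(2k-1)$-edge-connectivity with $k=2$ and the bridgeless-bipartite hypothesis of Lemma~\ref{lem1} does all the work; the extension to $G$ and the strong-property bookkeeping are automatic because a spanning subgraph's proper paths remain proper paths in the supergraph regardless of how the extra edges are colored.
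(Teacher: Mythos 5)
Your proposal is correct and is essentially the paper's own argument: the paper likewise obtains the theorem by applying Thomassen's result with $k=2$ to get a $2$-edge-connected spanning bipartite subgraph and then invoking Lemma~\ref{lem1} together with Lemma~\ref{lem3}. The extra details you supply (extending the coloring arbitrarily to $E(G)\setminus E(H)$ and noting that proper paths in $H$ remain proper in $G$) are exactly the routine bookkeeping the paper leaves implicit.
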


In the following, we will use Theorem~\ref{th0} to give a confirmative proof for Conjecture~\ref{conj2}.

\begin{thm}
If $G$ is a 2-connected noncomplete graph with $ diam(G)=3$, then $pc(G)=2$.
\end{thm}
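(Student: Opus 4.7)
The plan splits on edge-connectivity. If $G$ is $3$-edge-connected, Theorem~\ref{th0} immediately yields $pc(G)=2$. Otherwise $G$ has a $2$-edge-cut $F=\{e_1,e_2\}$ with $e_i=a_ib_i$, partitioning $V(G)$ into connected sides $A$ and $B$ with $a_i\in A$, $b_i\in B$. A parity argument shows that every $A$--$B$ path uses exactly one edge of $F$, so
\[ d_G(a,b)=\min_{i\in\{1,2\}} \bigl( d_{G[A]}(a,a_i)+1+d_{G[B]}(b_i,b) \bigr); \]
applying $\mathrm{diam}(G)=3$ at $(a,b)=(a_1,b_2)$ forces $\min\bigl(d_{G[A]}(a_1,a_2),\, d_{G[B]}(b_1,b_2)\bigr)\le 2$, which is the single structural fact driving everything else.

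I then split on whether the two cut-edges share an endpoint. If $a_1=a_2=a$, then $2$-connectedness forces $A=\{a\}$ with $\deg_G(a)=2$: I would color $ab_1$ and $ab_2$ distinctly, produce a $2$-edge-coloring of a suitable substructure of $G-a$ (by induction on $|V(G)|$, or by exhibiting a bridgeless bipartite spanning subgraph and invoking Lemma~\ref{lem1}), and reattach $a$ via Lemma~\ref{lem2}. If instead all four cut-endpoints are distinct, WLOG $d_{G[A]}(a_1,a_2)\le 2$. I would then set $c(e_1)\ne c(e_2)$ and build compatible $2$-edge-colorings of $G[A]$ and $G[B]$, after possibly augmenting each side by a virtual edge $a_1a_2$ or $b_1b_2$ so that Lemma~\ref{lem1} or Theorem~\ref{th0} supplies a coloring with the strong property. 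The goal is that for each $a\in A$, $b\in B$, concatenating a proper $a$-to-$a_i$ path with $e_i$ and a proper $b_i$-to-$b$ path yields a proper $(a,b)$-path.

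The main obstacle is the boundary-synchronization step in the second sub-case: I must ensure that every $v\in A$ admits a proper path to $a_1$ ending in a color $\ne c(e_1)$ \emph{and} a proper path to $a_2$ ending in a color $\ne c(e_2)$, and symmetrically on the $B$-side, with the internal proper-connectivity of each side preserved at the same time. The diameter hypothesis $\mathrm{diam}(G)=3$, via $\min(d_{G[A]}(a_1,a_2),\, d_{G[B]}(b_1,b_2))\le 2$, is precisely what makes such synchronization feasible---each side is short enough for the strong property to simultaneously realize the required start/end colors at the four boundary vertices. This is the exact feature missing from the counterexamples to Conjecture~\ref{conj1}, where the cut sides can be arbitrarily long and no such compatible pair of $2$-colorings exists.
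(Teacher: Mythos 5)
There is a genuine gap, and it sits exactly at the step you flag as the ``main obstacle.'' Your only quantitative use of $diam(G)=3$ is the inequality $\min\bigl(d_{G[A]}(a_1,a_2),\, d_{G[B]}(b_1,b_2)\bigr)\le 2$, but this controls only the two boundary vertices of each side and says nothing about vertices of $A$ or $B$ far from the cut. The consequence of the diameter hypothesis that actually drives a proof is much stronger: when both sides of the cut have at least three vertices, \emph{every} vertex of $H_i\setminus\{u_i,v_i\}$ must be adjacent to $u_i$ or $v_i$ (a path from $w\in H_1\setminus\{u_1,v_1\}$ to $x\in H_2\setminus\{u_2,v_2\}$ already spends two edges crossing and leaving the cut), and moreover the ``private'' neighborhoods $Q_{1,1},Q_{2,2}$ (resp.\ $Q_{1,2},Q_{2,1}$) cannot both be nonempty. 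Without this, your synchronization claim is just a restatement of the difficulty: the counterexamples to Conjecture~\ref{conj1} are precisely 2-edge-cut sides on which no $2$-coloring realizes the required end-colors at both boundary vertices, and nothing in your argument prevents $G[A]$ from containing similar internal pathology. Concretely, the tools you propose to invoke do not apply: $G[A]+a_1a_2$ need not be bipartite (so Lemma~\ref{lem1} is unavailable) nor $3$-edge-connected (so Theorem~\ref{th0} is unavailable), and Lemma~\ref{lem4} only gives three colors; there is no lemma in the paper producing a $2$-coloring with the strong property for a general $2$-connected side. Similarly, in your shared-endpoint subcase, $G-a$ need not be $2$-connected, need not have diameter $3$, and need not admit a bridgeless bipartite spanning subgraph, so neither the induction nor Lemma~\ref{lem1} is justified there.

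For comparison, the paper's proof splits differently: either \emph{some} $2$-edge-cut has both sides of order at least $3$, in which case the strong adjacency structure above lets one start from a $2$-connected bipartite core (the four cut-endpoints together with common neighbors, plus a matching between the private neighborhoods in one subcase) colored by Lemma~\ref{lem1}, and then absorb all remaining vertices one at a time via Lemma~\ref{lem2} since each attaches by at least two edges; or \emph{every} $2$-edge-cut has a side with at most two vertices, in which case a maximal $2$-edge-connected bipartite subgraph $H$ is shown to contain all vertices of degree at least $3$, and the leftover components (single vertices and single edges) are colored explicitly. Your sketch contains neither construction, and the missing synchronization step is the entire content of the theorem.
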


\begin{proof}
If $G$ is 3-edge-connected, Theorem~\ref{th0} implies that $pc(G)=2$. So, we may assume $\kappa'(G) = 2$, where $\kappa'(G)$ denotes the edge-connectivity of $G$. We distinguish the following two cases to proceed the proof.

\begin{figure}[h,t,b,p]
  \centering
 \scalebox{1}{\includegraphics{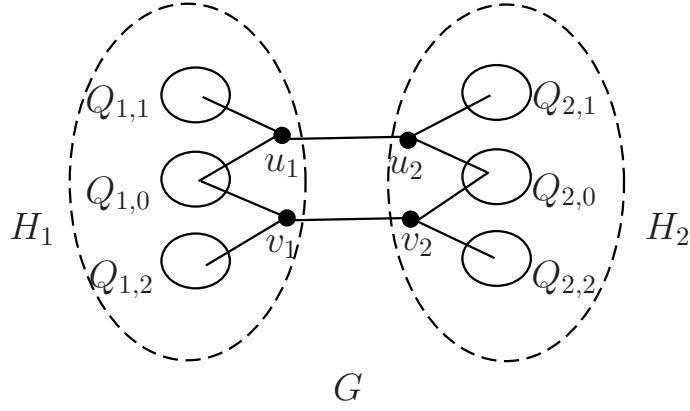}}\\
  \caption{The graph $G$ in Case 1}
\end{figure}

\textbf{Case 1:} There is a 2-edge-cut $S$ of $G$ such that each component of $G-S$ has at least three vertices.

Then, let $S = \{u_1u_2, v_1v_2\}$ be the 2-edge-cut of $G$ and $H_1, H_2$ be the components of
$G \backslash S$ such that $|H_i| \ge 3$ for $i=1,2$, where $u_1,v_1 \in H_1$ and $u_2,v_2 \in H_2$. Since $G$ is 2-connected, we have that $u_1 \ne v_1$ and $u_2 \ne v_2$. Let $Q_i=H_i\setminus \{u_i,v_i\}$ for $i=1,2$. Since $diam(G) = 3$, we know that any vertex $w\in Q_i$ must be adjacent to at least one vertex of $\{u_i,v_i\}$ for $i=1,2$. For each $Q_i$ ($i=1,2$), define the subsets $Q_{i,0} = N(u_i) \cap N(v_i) \cap Q_i$, $Q_{i,1} = (N(u_i) \cap Q_i)\backslash Q_{i,0}$ and $Q_{i,2} = (N(v_i) \cap Q_i)\backslash Q_{i,0}$. Since $diam(G)=3$, we have that at least one of $Q_{1,1}$ and $Q_{2,2}$ is empty. Without loss of generality, we may assume that $Q_{2,2}$ is empty. Similarly, at least one of $Q_{1,2}$ and $Q_{2,1}$ is empty. So, there are two subcases to deal with.

\textbf{Subcase 1.1:} $Q_{1,2}$ is empty.

If $Q_{1,0}$ is empty, then $u_1$ is a cut-vertex of $G$, a contradiction. So, $Q_{1,0}$ is nonempty. Similarly, $Q_{2,0}$ is nonempty.
Let $G_0=G[\{u_1,v_1,u_2,v_2\}\cup Q_{1,0}\cup Q_{2,0}]$. The graph $G_0$ contains a 2-connected bipartite spanning subgraph. So, $pc(G_0)=2$ from Lemmas \ref{lem3} and \ref{lem1}.

Let $G_1$ be a subgraph of $G$ obtained by adding a vertex to $G_0$ which has at least 2 edges into $G_0$. Furthermore, let $G_i$ be a
subgraph of $G$ obtained by adding a vertex to $G_{i-1}$ which has at least 2 edges connecting to $G_{i-1}$. By Lemma \ref{lem2}, $pc(G_i) = 2$ for all
$i$. We claim that such a sequence of subgraphs of $G$ exists, and we can find a spanning subgraph of $G$ by repeating this procedure. In order to
prove this, suppose that $G_i$ is the largest such subgraph of $G$ and suppose that there exists a vertex $v \in  G \setminus G_i$. Assume, without loss of generality, that $v\in Q_{1,1}$. Since $G$ is 2-connected, we have that there is a 2-fan from $v$ to $G_i$. So we can find a path from $v$ to $G_i$ other than $vu_1$ within $H_1$. Let $w$ be the last vertex on this path which is not in $G_i$. We know that $w$ must be adjacent to $u_1$. This means that $d_{G_i}(w) \ge 2$, and so we may set $G_{i+1} = G_i \cup w$ to get a contradiction. This completes the proof.

\textbf{Subcase 1.2:} $Q_{2,1}$ is empty.

If $E(Q_{1,1}, Q_{1,2})= \emptyset$, then we can get $pc(G)=2$ by a similar argument as in Case 1. Now we have $|E(Q_{1,1}, Q_{1,2})|\ge 1$. Let $M$ be a maximum matching of $E(Q_{1,1}, Q_{1,2})$, and let $G_0'=G[\{u_1,u_2,v_1,v_2\} \cup M \cup Q_{2,0}]$. Note that $G[\{u_2,v_2\} \cup Q_{2,0}]$ contains a 2-connected bipartite spanning subgraph if $|Q_{2,0}|\ge 2$. From Lemma \ref{lem1} we know that $G[\{u_2,v_2\} \cup Q_{2,0}]$ has a 2-edge-coloring with the strong property. If $|M| \ge 2$, then $G[M \cup \{u_1, v_1\}]$ contains a 2-connected bipartite spanning subgraph, and we can also get that $G[M \cup \{u_1, v_1\}]$ has a 2-edge-coloring with the strong property by Lemma \ref{lem1}. It is easy to check that $pc(G_0')=2$.

Let $G_1'$ be a subgraph of $G$ obtained by adding a vertex to $G_0'$ which has at least 2 edges connecting to $G_0'$. Furthermore, let $G_i'$ be a subgraph of $G$ obtained by adding a vertex to $G_{i-1}'$ which has at least 2 edges connecting to $G_{i-1}'$. By Lemma \ref{lem2}, $pc(G_i') = 2$ for all
$i$. We claim that such a sequence of subgraphs of $G$ exists, and we can find a spanning subgraph of $G$ by repeating this procedure. In order to
prove this, suppose that $G_i'$ is the largest such subgraph of $G$ and suppose that there exists a vertex $v \in  G \setminus G_i'$. Let $Q_{1,1}'=Q_{1,1}\setminus V(G_i')$, $Q_{1,2}'=Q_{1,2}\setminus V(G_i')$. According to the construction of $G_0'$, we have $E(Q_{1,1}', Q_{1,2}')=\emptyset$. Certainly, every vertex adjacent to both $u_1$ and $v_1$ is in $G_i'$. This means $v \in Q_{1,1}' \cup Q_{1,2}'$. Without loss of generality, we assume $v \in Q_{1,1}'$. Since $G$ is 2-connected, we have that there is a 2-fan from $v$ to $G_i'$. So, we can find a path from $v$ to $G_i'$ other than $vu_1$ within $H_1$. Let $w$ be the last vertex on this path which is not in $G_i'$. We know that $w$ must be adjacent to $u_1$. This means that $d_{G_i'}(w) \ge 2$, and so we may set $G_{i+1}' = G_i' \cup w$ to get a contradiction. This completes the proof.

\textbf{Case 2:} For every 2-edge-cut $S$ of $G$, $G-S$ has a component with at most two vertices.

If $G$ does not have even cycle, then $G=C_3$, $C_5$ or $C_7$ since $G$ is 2-connected and $diam(G)=3$. It follows that $pc(G)=2$. Thus, we suppose $G$ contains an even cycle. Let $H=H(U,V)$ be a maximal 2-edge-connected bipartite subgraph of $G$. We claim that $H$ contains all the vertices with degree at least 3 of $G$. Assume, to the contrary, that there is a vertex $v \in V(G)\setminus V(H)$ with $d_G(v) \ge 3$. We know that there exist three edge-disjoint $(v,H)$-paths. If this is not the case, then we can find a 2-edge-cut $S'$ of $G$ such that each component of $G-S'$ has at least three vertices, which contradicts the assumption. 
By the pigeonhole principle, there is a pair of $(v, H)$-paths with the same parity of number of edges. Using these two paths, we can get a 2-edge-connected bipartite subgraph $H'$ containing the vertex $v$ and $H$, which contradicts to the maximality of $H$.

If $H$ is a spanning subgraph of $G$, then $pc(G)=2$ by Lemmas~\ref{lem3} and~\ref{lem1}. Otherwise, the components of $G-H$ has the following two types: (1) an isolated vertex; (2) an edge. Let $A_1, \ldots, A_p, B_1, \ldots, B_q$ be the components of $G-H$ such that $|A_i|=1 \ (1 \le i \le p)$ and $|B_j|=2 \ (1 \le j \le q)$, where $p, q$ are nonnegative integers, and $p=0$ or $q=0$ means that there is no $A_i$-type component or $B_j$-type component. Let $N(B_j)=\{a_j, b_j\}$. Then $a_j \ne b_j$ since $G$ is 2-connected. If $a_j$ and $b_j$ are in different partite sets of $H$, then $B_j \cup H$ is also a 2-edge-connected bipartite graph, which contradicts to the maximality of $H$. So, for each $B_j$ we have that $a_j$ and $b_j$ are in the same partite set of $H$. Let $C(a, b)=\{B_i | N(B_i)=\{a,b\}, 1\le i \le q\}$. Since $H$ is a 2-edge-connected bipartite graph, it follows from Lemma \ref{lem1} that $H$ has a 2-edge-coloring $c$ which makes $H$ have the strong property under $c$. If $|C(a,b)| \ge 2$, then $G[V(C(a,b)) \cup \{a,b\}]-ab$ is a 2-edge-connected bipartite graph. Thus, there is a 2-edge-coloring $c$ such that $G[V(C(a,b)) \cup \{a,b\}]-ab$ has the strong property under $c$ by Lemma \ref{lem1}. Now we color the edges of $G \setminus \{A_1,  \ldots, A_p\}$ with two colors $\{1, 2\}$. Firstly, we color the edges of $H$ such that $H$ has the strong property under this coloring. Then, we color the edges of $G[U]$ and $G[V]$ with color 2. If $|C(a,b)| \ge 2$, then we color the edges of $G[V(C(a,b)) \cup \{a,b\}]-ab$ such that $G[V(C(a,b)) \cup \{a,b\}]-ab$ has the strong property under this coloring. If $|C(a,b)|=1$, then $G[V(C(a,b)) \cup \{a,b\}]-ab$ is a path $P$ with length 3. Thus, we color the two pendant edges of $P$ with color 1 and the central edge of $P$ with color 2.

Next, we will show that this 2-edge-coloring $c$ makes $G \setminus \{A_1,  \ldots, A_p\}$ proper connected. Let $u,v$ be any two vertices of $G \setminus \{A_1,  \ldots, A_p\}$. If both $u $ and $v$ are in $H$, then there is already a proper path connecting them in $H$. If one of $u, v$ is in $H$, without loss of generality, let $u \in H$, then $v$ has a neighbor $v'$ in $H$. Since $H$ has the strong property under $c$, it follows that there is a proper path $P$ connecting $u$ and $v'$ in $H$ such that $end(P)\ne c(vv')$, and $P\cup \{vv'\}$ is a proper path connecting $u, v$. If $\{u, v\} \in V(C(a,b))$, then there is already a proper path connecting them in $G[V(C(a,b)) \cup \{a,b\}]-ab$. Suppose that $u \in V(C(a,b))$ and $v \in V(C(a', b'))$. Since $diam(G)=3$, we have $E(\{a,b\}, \{a',b'\}) \ne \emptyset$. If $\{a, b , a' ,b'\} \subseteq U$ or $V$, then without loss of generality, let $aa' \in E(\{a,b\}, \{a',b'\})$. If $b=b'$ and $|C(a,b)| \ge 2$, $|C(a', b')| \ge 2$, then it is easy to check that there is a proper path $P_1$ connecting $u$ and $b$ with $end(P_1)=1$ in $G[V(C(a,b)) \cup \{a,b\}]-ab$, and there is also a proper path $P_2$ connecting $v$ and $b'$ with $end(P_2)=2$ in $G[V(C(a',b')) \cup \{a',b'\}]-a'b'$. Thus, $P_1 \cup P_2$ is a proper path connecting $u$ and $v$. If $b \ne b'$ or $b=b'$ and $|C(a, b)|=1$ or $b=b'$ and $|C(a',b')|=1$, then it is easy to see that there is a proper path $P_1$ connecting $u$ and $a$ with $end(P_1)=1$ in $G[V(C(a,b)) \cup \{a,b\}]-ab$, and there is also a proper path $P_2$ connecting $v$ and $a'$ with $end(P_2)=1$ in $G[V(C(a',b')) \cup \{a',b'\}]-a'b'$. Thus, $P_1 \cup \{aa'\} \cup P_2$ is a proper path connecting $u$ and $v$. If $\{a, b\} \subseteq U$ and $\{a', b'\}\subseteq V$, then there is already a proper $(\{a,b\}, \{a', b'\})$-path $P$ in $H$ with $start(P)=end(P)=2$. Without loss of generality, let $a ,a'$ be the two endvertices of $P$. It is easy to check that there is a proper path $P_1$ connecting $u$ and $a$ with $end(P_1)=1$ in $G[V(C(a,b)) \cup \{a,b\}]-ab$, and there is also a proper path $P_2$ connecting $v$ and $a'$ with $end(P_2)=1$ in $G[V(C(a',b')) \cup \{a',b'\}]-a'b'$. Thus, $P_1 \cup P \cup P_2$ is a proper path connecting $u$ and $v$. Hence, $G \setminus \{A_1,  \ldots, A_p\}$ is proper connected. Since $d_G(v) = 2$ for each vertex $v$ of $A_i(1 \le i \le p)$, it follows that $pc(G)=2$ by Lemma~\ref{lem2}, completing the proof.
\end{proof}

\end{document}